\documentclass{amsart}
\usepackage{amsmath,amssymb,amsthm}
\usepackage{color} 
\definecolor{darkblue}{rgb}{0,0,0.6}
\usepackage[breaklinks, pdftex, ocgcolorlinks,colorlinks=true, citecolor=darkblue, filecolor=darkblue, linkcolor=darkblue, urlcolor=darkblue]{hyperref}
\usepackage[capitalize,nameinlink]{cleveref}
\usepackage{tikz}
\usetikzlibrary{decorations.pathreplacing}

\author{Daniel Kasprowski}
\address{Rheinische Friedrich-Wilhelms-Universit\"at Bonn, Mathematisches Institut,\newline\indent Endenicher Allee 60, 53115 Bonn, Germany}
\email{kasprowski@uni-bonn.de}
\title{Coarse embeddings into products of trees}
\date{\today}

\newtheorem{thm}[equation]{Theorem}
\newtheorem{lem}[equation]{Lemma}
\theoremstyle{definition}
\newtheorem{defi}[equation]{Definition}
\newtheorem{rem}[equation]{Remark}

\newcommand{\cU}{\mathcal{U}}
\newcommand{\cV}{\mathcal{V}}

\newcommand{\R}{\mathbb{R}}
\newcommand{\N}{\mathbb{N}}
\newcommand{\wt}{\widetilde}
\DeclareMathOperator{\diam}{diam}

\renewcommand{\phi}{\varphi}

\keywords{Asymptotic dimension; coarse embeddings}
\subjclass[2010]{20F69 (Primary); 20F65 (Secondary)}

\begin{document}
	\begin{abstract}
		We give a short and elementary proof of the fact that every metric space of finite asymptotic dimension can be coarsely embedded into a finite product of trees.
	\end{abstract}
	\maketitle
	
	The goal of this note is to prove the following theorem.
		\begin{thm}
		\label{thm:main}
		Let $X$ be a metric space with asymptotic dimension at most $n$. Then there exists a coarse embedding of $X$ into a product of $n+1$ trees. If $X$ is proper, the trees can be chosen to be locally finite.
	\end{thm}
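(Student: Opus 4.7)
The plan is to exploit the multi-scale cover characterization of asymptotic dimension and build one tree per color class, taking the product of the resulting coordinate maps. For each $k \in \N$, I invoke the hypothesis to produce a cover $\cU^k = \cU_0^k \cup \cdots \cup \cU_n^k$ of $X$ of uniformly bounded diameter $D_k$ such that each subfamily $\cU_i^k$ is $R_k$-disjoint. The scales will be chosen to grow rapidly, say with $R_{k+1}$ much larger than $D_k$, so that elements at scale $k$ are much smaller than the separation at the next scale.

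For each color $i \in \{0,\dots,n\}$, I construct a tree $T_i$ whose vertices are the elements of $\bigsqcup_{k\ge 0}\cU_i^k$ arranged by level, together with a common root $\ast$. Each $U \in \cU_i^k$ is joined by an edge of length roughly $R_k$ to a chosen parent: some $U' \in \cU_i^{k-1}$ meeting $U$ if such exists, or the root $\ast$ otherwise. The $R_{k-1}$-disjointness of $\cU_i^{k-1}$, combined with the diameter bound $D_k$, guarantees that the parent is essentially unique once the scales are calibrated. Define $\phi_i : X \to T_i$ by sending $x$ to the deepest vertex $U \in \cU_i^k$ with $x \in U$, and set $\Phi := (\phi_0,\dots,\phi_n) : X \to T_0 \times \cdots \times T_n$.

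The verification that $\Phi$ is a coarse embedding splits into two estimates. For the upper bound, two points at distance at most $r$ lie in a common element of each $\cU_i^k$ as soon as $D_k \gg r$, so their images in $T_i$ coincide from some level upward and the tree distance is bounded by a function of $r$ alone. For the lower bound, if $d(x,y) > R_k$ then no single element of any $\cU_i^k$ contains both $x$ and $y$; since the $\cU_i^k$ jointly cover $X$, at least one color $i$ separates $x$ and $y$ at level $k$, forcing $d_{T_i}(\phi_i(x),\phi_i(y)) \gtrsim R_k$. Summed over coordinates, this gives a proper lower control in terms of $d(x,y)$.

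The main obstacle I anticipate is keeping the parent assignment coherent, so that the path in $T_i$ from $\phi_i(x)$ to the root passes through vertices whose underlying sets still essentially contain $x$ at every intermediate level; without this, the upper estimate breaks. A careful choice of the scale sequence $R_k,D_k$ (for instance geometric growth $R_k \asymp 2^k$ with $D_k$ much smaller than $R_{k+1}$) should resolve this, and is essentially the only delicate point of the argument. Finally, when $X$ is proper, one can choose each $\cU^k$ to be locally finite, and this propagates to local finiteness of each $T_i$, giving the second assertion.
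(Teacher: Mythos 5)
Your high-level architecture (multi-scale colored covers, one tree per color, product of coordinate maps) is the same as the paper's, but the proposal has a genuine gap at exactly the point you flag as "the only delicate point": the coherence of the parent chains. This is not something that any calibration of the scales $R_k, D_k$ can deliver. If the covers at different scales are chosen independently, then an element $V\in\cU_i^{k-1}$ containing $x$ has no reason to be contained in, or even to have its parent chain pass through, the element of $\cU_i^{k+1}$ containing $x$ --- especially since for a \emph{fixed} color $i$ the family $\cU_i^k$ need not cover $X$, so at intermediate levels there may be no element of color $i$ containing $x$ at all, and the chain starting at $V$ can wander off through sets that miss $x$. The paper resolves this by actively \emph{modifying} the covers: each scale-$k$ element is enlarged to absorb the $l$-neighborhoods of all nearby scale-$l$ elements for every $l<k$, yielding a genuine containment property (its Lemma~\ref{lem:covers}\eqref{prop:1}: $d(U,V)\le l$ forces $V^l\subseteq U$). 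Both coarse estimates are then deduced from that containment, not from scale growth. Without some such absorption step your proof does not close. A secondary but real problem is that your parent-uniqueness claim points the wrong way: a set of diameter $D_k$ meets \emph{many} $R_{k-1}$-disjoint sets when $R_{k-1}\ll D_k$; uniqueness only holds for a small set meeting at most one element of a much more separated family. Relatedly, "the deepest vertex $U\in\cU_i^k$ with $x\in U$" need not exist (for a fixed color, $x$ can lie in elements at infinitely many levels, or at none), which is why the paper takes the \emph{minimal} such level.

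Both of your estimates also lean on false intermediate claims. For the upper bound, "two points at distance at most $r$ lie in a common element of each $\cU_i^k$ as soon as $D_k\gg r$" fails twice over: a large diameter bound does not prevent $x$ and $y$ from straddling two elements (you need a Lebesgue-number argument, as in the paper's Remark~\ref{rem:lebesgue}), and even then they only land in a common element of \emph{some} color, not of each color; the paper needs an extra arrangement (Lemma~\ref{lem:covers}\eqref{prop:2}, cycling a distinguished ball through the colors) just to make each tree connected and each $\phi^j$ defined everywhere. For the lower bound, knowing that no element of $\cU_i^k$ contains both $x$ and $y$ does not force $d_{T_i}(\phi_i(x),\phi_i(y))\gtrsim R_k$: the geodesic between $\phi_i(x)$ and $\phi_i(y)$ in the tree need not pass near level $k$ at all if color $i$ rarely contains $x$. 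The paper instead runs a pigeonhole over the $n+1$ colors: among the first $(n+1)k$ levels some color contains $x$ at $k$ of them, producing a nested chain of length $k$ above $\phi^j(x)$, and only then does a bound on the tree distance force $\phi^j(y)$ into a set of controlled diameter containing $x$. You should either import the absorption construction and these two mechanisms, or find substitutes for them; as written the proposal is a correct outline of the strategy but not a proof.
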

This theorem was proved by Dranishnikov for geodesic metric spaces of bounded geometry \cite[Theorem~3]{dranishnikov_trees}. Dranishnikov and Zarichnyi \cite[Theorem~3.5]{dranishnikov_universal} showed that proper metric spaces can be coarsely embedded into a product of $n+1$ binary trees. Using an ultralimit construction, Guentner, Tessera and Yu \cite[Proof of Theorem~4.1]{GTY_FDC} used this to show that every metric space with asymptotic dimension at most $n$ can be embedded into a product of (n+1)-many 0-hyperbolic spaces. This was used to show that spaces with finite asymptotic dimension have finite decomposition complexity.

The proof presented here is short and elementary. The idea on how the trees are constructed is based on the original proof of Dranishnikov.

We begin by recalling the notions from \cref{thm:main}.
\begin{defi}
	A metric space $X$ is called \emph{proper} if every bounded, closed subspace is compact.
\end{defi}
\begin{defi}
	A metric space $X$ has \emph{asymptotic dimension at most $n$} if for every $r>0$ there exist $f(r)\in\R$ and families $\cU^0,\ldots,\cU^n$ of subspaces of $X$, such that
	\begin{enumerate}
		\item \[X=\bigcup_{i=0}^n\bigcup_{U\in\cU^i}U;\]
		\item for every $i$ each $U\in\cU^i$ has diameter at most $f(r)$;
		\item  each $\cU^i$ is $r$-disjoint, that is, $d(V,U)\geq r$ for all $V,U\in\cU^i$ with $U\neq V$.
	\end{enumerate}	
	Any function $f\colon (0,\infty)\to \R$ with the above property is called a \emph{control function} for $X$.
	
	We call $\cU^0,\ldots,\cU^n$ an \emph{$r$-disjoint cover of diameter at most $f(r)$}.
\end{defi}
For a subspace $V$ of $X$ and $r\geq 0$ we define by $V^r$ the $r$-neighborhood of $V$, that is,
\[V^r:=\{x\in X\mid d(x,V)\leq r\}.\]
\begin{rem}
	\label{rem:lebesgue}
	Note that we can assume that the above covers also have a given Lebesgue number. This can be seen as follows. Let $k\geq0$ be fixed and take some $r+2k$-disjoint cover $\cU^0,\ldots,\cU^n$ of diameter at most $f(r+2k)$. Consider the cover given by $\cV^i:=\{U^k\mid U\in\cU^i\}$. Then $\cV^0,\ldots,\cV^n$ is an $r$-disjoint cover of diameter at most $f(r+2k)+2k$ and has Lebesgue number $k$, i.e.\ for every $x\in X$ there exists a $V\in\cV^i$ for some $i$ with $B_k(x)\subseteq V$.
\end{rem}

\begin{defi}
	Let $f\colon X\to Y$ be a function between metric spaces.
	\begin{enumerate}
		\item $f$ is \emph{uniformly expansive} if there exists a non-decreasing function \[\rho\colon [0,\infty)\rightarrow[0,\infty)\] such that for all $x,y \in X$
		\begin{equation*}
		d_Y(f(x), f(y))\leq \rho(d_X(x,y)).
		\end{equation*}		
		\item $f$ is \emph{effectively proper} if there exists a proper non-decreasing function \[\delta\colon [0,\infty)\rightarrow[0,\infty)\] such that for all $x,y \in X$
		\begin{equation*}
		\delta(d_X(x, y))\leq d_Y(f(x), f(y)).
		\end{equation*}		
		\item $f$ is a \emph{coarse embedding} if it is both uniformly expansive and effectively proper.
	\end{enumerate}
\end{defi}

From now on we fix a metric space $X$ of asymptotic dimension at most $n$ and a control function $f'$ with the additional property that $f'$ is non-decreasing. We define $f(x):=f'(3x)+3x$. Define $g\colon\N\to\R$ inductively by $g(0)=2$ and
\[g(k):=100f(g(k-1)).\]
Note that $g$ is non-decreasing and $g(k)\geq 2k$. Let $x_0\in X$ be a base-point.
\begin{lem}
	\label{lem:covers}
	There exist $\tfrac{9}{10}g(k)$-disjoint covers $\cU^0_k,\ldots, \cU^n_k$ of diameter at most $2f(g(k))$ such that
	\begin{enumerate}
		\item\label{prop:1} for every $j$, every $l<k$ and all $U\in \cU^j_k, V\in\cU^j_l$ we have $d(U,V)\leq l$ implies $V^l\subseteq U$;
		\item\label{prop:2} for $0\leq i\leq n$ with $i=k$ modulo $n+1$, we have $B_k(x_0)\subseteq U$ for some $U\in \cU_k^i$.
	\end{enumerate}
\end{lem}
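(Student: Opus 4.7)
The plan is to prove the lemma by induction on $k$. The base case $k = 0$ is immediate from the asymptotic dimension definition combined with \cref{rem:lebesgue} applied at $r = g(0) = 2$: property~(\ref{prop:1}) is vacuous, and permuting color labels so that some element of $\cU^0_0$ contains $x_0$ achieves property~(\ref{prop:2}).

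For the inductive step, suppose $\cU^j_l$ has been constructed for all $l < k$. Our choice $f(x) = f'(3x) + 3x$ makes the hypothesis of \cref{rem:lebesgue} applicable, yielding a starting cover $\cV^0, \ldots, \cV^n$ that is $g(k)$-disjoint, of diameter at most $f(g(k))$, and with Lebesgue number $g(k)$. Use the Lebesgue property to find a set $V_0$ in the cover containing $B_{g(k)}(x_0)$, and permute color indices so that $V_0 \in \cV^i$ with $i \equiv k \pmod{n+1}$. For each $V \in \cV^j$ define
\[
U_V := V \cup \bigcup_{l=0}^{k-1}\ \bigcup_{\substack{W \in \cU^j_l\\ d(V, W) \leq l}} W^l,
\qquad
\cU^j_k := \{U_V \mid V \in \cV^j\}.
\]
Each absorbed $W^l$ sits in $V^{2l + \diam(W)}$ with $\diam(W) \leq 2f(g(k-1)) = g(k)/50$ and $l \leq k-1$, so $U_V \subseteq V^{2(k-1) + g(k)/50}$. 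Using $f(g(k)) \geq 3g(k)$ and the growth estimate $g(k) \geq 600(k-1)$, a short computation gives $\diam(U_V) \leq 2f(g(k))$ and $d(U_V, U_{V'}) \geq \tfrac{9}{10}g(k)$ for distinct $V, V' \in \cV^j$; property~(\ref{prop:2}) then follows from $B_k(x_0) \subseteq B_{g(k)}(x_0) \subseteq V_0 \subseteq U_{V_0}$.

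The delicate point is property~(\ref{prop:1}). Given $W' \in \cU^j_l$ with $d(U_V, W') \leq l$, choose $x \in U_V$ with $d(x, W') \leq l$. If $x \in V$ then $d(V, W') \leq l$ and $W'$ is absorbed by construction. Otherwise $x \in W^{l_*}$ for some absorbed $W \in \cU^j_{l_*}$, giving $d(W, W') \leq l + l_*$. The same-level case $l = l_*$ is handled by the $\tfrac{9}{10}g(l)$-disjointness of $\cU^j_l$ combined with $g(l) > 20l/9$, which forces $W = W'$. The hardest case is $l \neq l_*$: the bound $d(W, W') \leq l + l_*$ is too weak to invoke the inductive property~(\ref{prop:1}) at level $\max(l, l_*)$ directly. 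The intended resolution is a Lebesgue-number argument: since both $W$ and $W'$ have diameter at most $g(k)/50$, the ball $B_{g(k)}(x)$ swallows them and is itself contained in some $V'' \in \cV^{j''}$; the $g(k)$-disjointness of $\cV^j$ together with the calibration $g(k) = 100 f(g(k-1))$ is then engineered to force $V'' = V$, so that $W' \subseteq V$ and $W'^l \subseteq U_V$. Making this color-matching step precise is the main obstacle.
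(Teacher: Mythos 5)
There is a genuine gap, and you have located it yourself: property~(\ref{prop:1}) is not established for your construction, and in fact the construction itself (not merely the verification) appears to fail it. Your $U_V$ absorbs only those $W\in\cU^j_{l_*}$ with $d(V,W)\leq l_*$, measured from the \emph{original} $V$ and with the \emph{tight} threshold $l_*$. Now take $W\in\cU^j_{l_*}$ with $d(V,W)=l_*$ (so $W^{l_*}$ is absorbed) and $W'\in\cU^j_l$ with $l<l_*$, $d(W',W)=l+l_*$ and $d(W',V)$ large. Then $d(U_V,W')\leq l$, so property~(\ref{prop:1}) demands $W'^l\subseteq U_V$; but $W'$ is not absorbed, it is not contained in $W^{l_*}$ (since $d(W',W)>l_*$), and the inductive property~(\ref{prop:1}) for the pair $(W,W')$ gives nothing because it requires $d(W,W')\leq l$, not $\leq l+l_*$. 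Nothing in the inductive hypotheses excludes such a configuration. Your proposed rescue via the Lebesgue number cannot work either: the Lebesgue property only guarantees that $B_{g(k)}(x)$ lies in some $V''\in\cV^{j''}$ for \emph{some} color $j''$, and there is no way to force $j''=j$; the $g(k)$-disjointness argument you sketch only applies once the colors already match.

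The paper's construction differs in exactly the two places needed to close this case. First, the absorption is a \emph{cascade}: starting from $W_k(U):=U$ one descends through the levels, and at level $i$ one absorbs the $i$-neighborhoods of all $V\in\cU^j_i$ within distance $f(g(i))$ of the \emph{already enlarged} set $W_{i+1}(U)$, not of the original $U$. Second, the absorption threshold is the \emph{generous} quantity $f(g(i))$, not $i$. These choices make the verification close: if $V\in\cU^j_l$ satisfies $d(W_0(U),V)\leq l$, one propagates this upward to $d(W_{l+1}(U),V)<l+5f(g(l-1))<f(g(l))$, so either $V$ was itself absorbed at level $l$, or $V$ lies within $5f(g(l-1))+2l<\tfrac{9}{10}g(l)$ of some absorbed $V'\in\cU^j_l$ and the $\tfrac{9}{10}g(l)$-disjointness forces $V=V'$. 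The slack between the absorption radius $f(g(i))$ and the disjointness constant $\tfrac{9}{10}g(i)$ is precisely what replaces the color-matching step you could not supply. Your base case, the diameter and disjointness estimates, and property~(\ref{prop:2}) are all fine; only the absorption scheme needs to be replaced by the cascading one.
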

Note that Property \eqref{prop:1} is the main condition. Property \eqref{prop:2} is only a technical point to ensure that the trees we will construct are connected. It ensures that for every $0\leq i\leq n$ eventually every point is contained in an element of $\cU^i$. It is only here that we use the existence of covers with large Lebesgue number from \cref{rem:lebesgue}.
\begin{proof}
%
For $k$, pick a $g(k)$-disjoint cover $\wt \cU_k^0,\ldots,\wt\cU_k^n$ of diameter at most $f(g(k))$ with Lebesgue number at least $k$. Let $0\leq i\leq n$ be such that $i=k$ modulo $n+1$. By renumbering $\wt \cU^0_k,\ldots, \wt\cU_k^n$, we can assume that $B_k(x_0)\subseteq U$ for some $U\in \wt\cU_k^i$. This is done to ensure Property~\eqref{prop:2}.

The cover $\cU_k^j$ will depend on $\wt\cU_k^j$ and the inductively defined $\cU_l^j$ for $l<k$. We begin with $\cU^j_0:=\wt \cU^j_0$. For $U\in\wt \cU_k^j$, we will define inductively $W_i(U)$ and $B_i(U)$ for $i\leq k$. First define $W_k(U):=U$. Assuming that $W_{i+1}(U)$ is defined, we define $B_{i}(U):=\{V\in \cU_{i}^j\mid d(V,W_{i+1}(U))<f(g(i))\}$ and
 \[W_i(U)=W_{i+1}(U)\cup\bigcup_{V\in B_i(U)}V^i.\]
Define $\cU^j_k:=\{W_0(U)\mid U\in\wt\cU^j_k\}$. 
By induction we will show that each element in $\cU^j_k$ has diameter at most $2f(g(k))$. This is clear for $\cU^j_0$ since $\cU^j_0=\wt \cU^j_0$ which has diameter at most $f(g(0))$ by assumption.

Since each $V\in B_i(U)$ has diameter at most $2f(g(i))$, the set $V^i$ has diameter at most $2f(g(i))+2i\leq 3f(g(i))$. Since also the distance from $V$ to $W_{i+1}(U)$ is at most $f(g(i))$, we have $W_i(U)\subseteq W_{i+1}(U)^{4f(g(i))}$. This implies that the diameter of $W_i(U)$ is at most $\diam W_{i+1}(U)+8f(g(i))$. As $9f(g(i))\leq f(g(i+1))$, inductively we see that 
\[\diam W_0(U)\leq \diam U+\sum_{i<k}8f(g(i))\leq \diam U+9f(g(k-1))\leq 2f(g(k)).\] Furthermore, it follows that the cover $\cU^j_k$ is $\tfrac{9}{10}g(k)$ disjoint.

We have to show that $\cU^j_k$ has the desired property. Suppose $l<k$, $U\in \wt\cU^j_k$, $V\in \cU^j_l$ with $d(W_0(U),V)\leq l$. 
As $W_{i-1}(U)\subseteq W_i(U)^{4f(g(i-1))}$, we have $d(V,W_i(U))\leq d(V,W_{i-1}(U))+4f(g(i-1))$. Similarly to the diameter estimate, it follows that 
\[d(W_{l}(U),V)\leq d(W_0(U),V)+\sum_{i<l}4f(g(i))< l+5f(g(l-1)).\] If $d(W_{l+1}(U),V)<l+5f(g(l-1))<f(g(l))$, then $V^l\subseteq W_l(U)\subseteq W_0(U)$ by construction. Otherwise, $d(V,V')<5f(g(l-1))+2l<\tfrac{9}{10}g(l)$ for some $V'\in B_{l}(U)$. But since $V,V'\in\cU_l^j$ and $\cU_l^j$ is $\tfrac{9}{10}g(l)$-disjoint, we have $V=V'$ and thus also $V^l\subseteq W_l(U)\subseteq W_0(U)$.
\end{proof}

Given covers $\cU^0_k,\ldots, \cU^n_k$ as in \cref{lem:covers}, we define trees $T^j$ as follows. The vertices of $T^j$ are the elements $\{(U,k)\mid k\in\N, U\in \cU^j_k\}$ and $(V,k),(U,k')\in T^j$ with $k<k'$ are connected by an edge if and only if $V\subseteq U$ and there is no $k<l<k'$ such that there exists $W\in\cU_l^j$ with $V\subseteq W$. Note that this in particular implies that $(V,k)$ and $(U,k')$ are connected by a sequence of edges if $V\subseteq U$. In the following we will denote the vertices $(U,k)$ only by $U$. By Property~\eqref{prop:1} of \cref{lem:covers}, the constructed graph contains no cycles. That it is connected can be seen as follows. Given vertices $U,U'$. Since $U$ and $U'$ are bounded, there is $k\in\N$ such that $U$ and $U'$ are both contained in $B_k(x_0)$. By enlarging $k$ we can assume $k=j\mod n+1$. Hence there exists $V\in\cU^j_k$ with $B_k(x_0)\subseteq V$ by Property \eqref{prop:2} of \cref{lem:covers}. It follows that both $U$ and $U'$ are connected to $V$. 

Every vertex $U\in \cU^j_k$ is only connected by an edge to one vertex $V$ with $U\subseteq V$ by definition of $T^j$. If $X$ is proper, then for every $l< k$ there can only be finitely many $V\in \cU^j_l$ that are contained in $U$ by disjointness of $\cU^j_l$. Hence the trees $T^j$ are locally finite if $X$ is proper.

For $x\in X$ define $\psi^j(x):=\min\{k\in\N\mid \exists U\in\cU_k^j, x\in U\}$ and let $\phi^j(x)\in \cU^j_{\psi^j(x)}$ be the element containing $x$. 
\begin{lem}
	The map $\phi^j\colon X\to T^j$ is uniformly expansive when $T^j$ is considered as a metric space with the $l^1$-path metric.
\end{lem}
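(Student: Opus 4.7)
Given $x, y \in X$ with $r := d_X(x, y)$, I aim to bound $d_{T^j}(\phi^j(x), \phi^j(y))$ by a function of $r$. Let $K := K(r)$ be the smallest non-negative integer with $\tfrac{9}{10} g(K) > r$; this is finite since $g$ is unbounded, and in fact $K \leq r$ follows from $g(k) \geq 2k$. The crucial observation is that for every $l \geq K$, the $\tfrac{9}{10} g(l)$-disjointness of $\cU^j_l$ forces any two of its elements containing $x$ and $y$, respectively, to coincide (otherwise they would be at distance exceeding $r$).

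Write $k_x := \psi^j(x)$, $k_y := \psi^j(y)$, $U_x := \phi^j(x)$, $U_y := \phi^j(y)$, and assume $k_x \leq k_y$ by symmetry. The main case is $r \leq k_x$: Property~(1) of \cref{lem:covers}, applied to $U_x \in \cU^j_{k_x}$ and $U_y \in \cU^j_{k_y}$ using $d(U_x, U_y) \leq r \leq k_x$, gives $U_x^{k_x} \subseteq U_y$; in particular $x \in U_y$. If $k_x = k_y$, the central observation yields $U_x = U_y$, so the distance is $0$. Otherwise $k_x < k_y$: let $N$ be the smallest level strictly greater than $k_x$ at which $x$ lies in some $V \in \cU^j_N$; then $N \leq k_y$. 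Applying Property~(1) to $U_x$ and $V$ yields $B_{k_x}(x) \subseteq V$, and since $r \leq k_x$ this gives $y \in B_r(x) \subseteq V$, so $k_y \leq N$. Combining, $k_y = N$, and the central observation identifies $V = U_y$. Therefore $\phi^j(y)$ is the parent of $\phi^j(x)$ in $T^j$, and $d_{T^j}(\phi^j(x), \phi^j(y)) \leq 1$.

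In the complementary regime $r > k_x$, the ancestor chain from $\phi^j(x)$ up to its first ancestor at level $\geq r$ contributes at most $r$ edges, being bounded by the number of defined levels of $x$ in $(k_x, r]$. From that ancestor, the analysis of the main case closes the $x$-side in at most one further step. A symmetric estimate gives at most $r$ additional edges on the $y$-side when $k_y < r$. The principal obstacle is the bookkeeping in this regime, where Property~(1) must be invoked iteratively at each successive defined level to synchronize the ancestors of $\phi^j(x)$ and $\phi^j(y)$ once levels exceed $\max(r, K) = r$, producing a uniform bound $d_{T^j}(\phi^j(x), \phi^j(y)) \leq \rho(r)$ depending only on $r$.
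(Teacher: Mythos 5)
Your overall strategy --- bounding $d_{T^j}(\phi^j(x),\phi^j(y))$ directly in terms of $r=d_X(x,y)$, rather than contrapositively as the paper does --- is viable, and your main case $r\leq\psi^j(x)$ is handled correctly. The gap is in the complementary regime, which is the general case. After climbing at most $\lceil r\rceil$ edges from $\phi^j(x)$ to its first ancestor $A_x$ at a level $\geq r$ (and likewise $A_y$ for $y$), you assert that ``the analysis of the main case closes the $x$-side in at most one further step''; but that analysis does not transfer verbatim, since it relied on the minimality of $\psi^j(y)$ among \emph{all} levels covering $y$, whereas here you need minimality among levels $\geq r$. More seriously, your closing sentence concedes that Property~(1) ``must be invoked iteratively at each successive defined level to synchronize the ancestors,'' and nothing you have written bounds the number of such iterations: for a fixed $j$ the families $\cU^j_l$ need not cover $X$, so a priori $x$ could lie in elements of $\cU^j_l$ for arbitrarily many levels $l\geq r$ before $y$ lies in any, and a level-by-level synchronization then gives no bound depending only on $r$.

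The missing idea is one further application of Property~(1). If $W\in\cU^j_l$ contains $x$ with $l\geq r$ and $P\in\cU^j_{l''}$ is the parent of $W$ in $T^j$, then $W\subseteq P$ gives $d(W,P)=0\leq l<l''$, hence $W^l\subseteq P$; since $d(y,W)\leq d(x,y)=r\leq l$, this forces $y\in W^l\subseteq P$. Thus the parent of $A_x$ already contains both $x$ and $y$, so it is a common ancestor of $\phi^j(x)$ and $\phi^j(y)$ (and an argument as in your main case, run with minimality over levels $\geq r$, identifies it with $A_y$ when the levels of $A_x$ and $A_y$ differ). This caps each side of the tree geodesic at $\lceil r\rceil+1$ edges and yields $d_{T^j}(\phi^j(x),\phi^j(y))\leq 2r+O(1)$, which is the same coarse Lipschitz bound the paper obtains by its contrapositive argument: there one takes two consecutive ancestors $W_x'\subseteq W_x$ of $\phi^j(x)$ just below the least common ancestor, notes that the level $l'$ of $W_x'$ is at least $\tfrac{k}{2}-2$, and uses $(W_x')^{l'}\subseteq W_x$ together with $y\notin W_x$ to force $d_X(x,y)\geq l'$.
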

\begin{proof}
	Given $x,y\in X$ with $d_{T^j}(\phi^j(x),\phi^j(y))\geq k\geq 4$, let $V\in \cU^j_m$ contain both $\phi^j(x)$ and $\phi^j(y)$ and let $m$ be minimal with this property. Without loss of generality we can assume $d_{T^j}(\phi^j(x),V)\geq \tfrac{k}{2}$. Then there are $W_x\in \cU^j_{l}, W_x'\in \cU^j_{l'}$ with $l'<l<m$, $\phi^j(x)\subseteq W_x'\subseteq W_x$, $\phi^j(y)\nsubseteq W_x$ and $d_{T^j}(\phi^j(x),W_x')\geq \tfrac{k}{2}-2$. These are the vertices in $T^j$ as depicted in the following figure:
	\begin{center}
			\begin{tikzpicture}
		\tikzstyle{every node}=[draw,circle,fill=white,minimum size=4pt,
		inner sep=0pt]
		\draw (0,0) node (1) [label=left:$V$] {}
		-- ++(230:1cm) node (2) [label=left:$W_x$] {}
		-- ++(240:1cm) node (3) [label=left:$W_x'$] {}
		++(240:1.5cm) node (4) [label=left:$\phi^j(x)$] {};
		\draw  (1)  ++(310:1cm) node (5) [label=right:$\phi^j(y)$] {};
		\draw [dotted] (3) -- (4);
		\draw [dotted] (1) -- (5);
		\draw [decorate,decoration={brace,amplitude=6pt},xshift=-4pt,yshift=0pt]
		(4) ++(190:1.1) -- ++(90:1.6)
		 node[midway,draw=none,anchor=east,label=left:$d_{T^j}\geq\tfrac{k}{2}-2~$]{};
		\end{tikzpicture}
	\end{center}
	 From $d_{T^j}(\phi^j(x),W_x')\geq \tfrac{k}{2}-2$ and $\phi^j(x)\subseteq W_x'$ it follows that $l'\geq \tfrac{k}{2}-2$. By Property \eqref{prop:1} from \cref{lem:covers}, the $l'$-neighborhood of $W_x'$ is contained in $W_x$ while, by definition of $\phi^j(Y)$, $y$ is not contained in $W_x$. Hence $d_X(x,y)\geq l'\geq \tfrac{k}{2}-2$.
	
	This shows $d_{T^j}(\phi^j(x),\phi^j(y))\leq 2d_X(x,y)+4$ and $\phi^j$ is coarsely (or large-scale) Lipschitz. In particular, it is uniformly expansive.
\end{proof}
We consider $\prod_{j=0}^nT^j$ with the metric $d((t_j)_j,(t_j')_j)=\sup_{0\leq j\leq n}d_{T^j}(t_j,t'_j)$. Now \cref{thm:main} is implied by the following theorem. 
\begin{thm}
	The map $\phi=\prod_{j=0}^n\phi^j\colon X\to \prod_{j=0}^nT^j$ is a coarse embedding.
\end{thm}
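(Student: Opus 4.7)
The plan is to verify uniform expansivity and effective properness separately. Uniform expansivity is immediate from the previous lemma, which gives $d_{T^j}(\phi^j(x),\phi^j(y))\leq 2d_X(x,y)+4$ for every $j$; since the metric on $\prod_j T^j$ is the supremum of the coordinate distances, the same estimate passes to $d(\phi(x),\phi(y))$. All the content is in effective properness: given $x,y\in X$ with $r:=d_X(x,y)$, I need a lower bound on $d(\phi(x),\phi(y))$ growing to infinity with $r$.

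My approach is to choose $k$ maximal with $2f(g(k))<r$ and show that some coordinate distance $d_{T^{j^*}}(\phi^{j^*}(x),\phi^{j^*}(y))$ is at least $(k+1)/(n+1)$. Two observations drive the argument. First, any common ancestor $V_j\in\cU^j_{m_j}$ of $\phi^j(x)$ and $\phi^j(y)$ in $T^j$ contains both $x$ and $y$ (via $\phi^j(x),\phi^j(y)\subseteq V_j$), so $\diam V_j\geq r$ forces $m_j>k$. Second, I claim that the ancestor chain of $\phi^j(x)$ in $T^j$ has exactly one vertex at each level in $I^j_x:=\{l : x\in U\text{ for some }U\in\cU^j_l\}$: whenever $l>\psi^j(x)$ and $x\in U\in\cU^j_l$, the relations $x\in U\cap\phi^j(x)$ give $d(U,\phi^j(x))=0\leq\psi^j(x)$, so Property~\eqref{prop:1} of \cref{lem:covers} yields $\phi^j(x)\subseteq U$ and hence $U$ is an ancestor; conversely every ancestor contains $x$, and $\tfrac{9}{10}g(l)$-disjointness of $\cU^j_l$ leaves at most one element through $x$ at each level.

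Combining these, for each $l\in\{0,1,\ldots,k\}$ the covering property $X=\bigcup_j\bigcup_{U\in\cU^j_l}U$ places $l\in I^{j(l)}_x$ for some $j(l)\in\{0,\ldots,n\}$, so by pigeonhole some $j^*$ satisfies $|I^{j^*}_x\cap[0,k]|\geq (k+1)/(n+1)$. Each such level contributes a distinct ancestor of $\phi^{j^*}(x)$ strictly below $V_{j^*}$ (whose level $m_{j^*}$ exceeds $k$), so $d_{T^{j^*}}(\phi^{j^*}(x),V_{j^*})\geq (k+1)/(n+1)$, and a fortiori $d(\phi(x),\phi(y))\geq (k+1)/(n+1)$. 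Since $k\to\infty$ as $r\to\infty$, this supplies the required proper control.

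The main obstacle is the second observation: translating between the ancestor chain of $\phi^j(x)$ in $T^j$ and the set of levels $I^j_x$ at which $x$ is covered, via Property~\eqref{prop:1} of \cref{lem:covers}. Once that dictionary is in hand, everything else reduces to a pigeonhole over the $n+1$ covers together with the diameter bound on $V_j$.
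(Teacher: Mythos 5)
Your proof is correct and follows essentially the same route as the paper: a pigeonhole over the $n+1$ covers produces roughly $k/(n+1)$ nested levels of $\cU^{j^*}$ containing $x$, Property~\eqref{prop:1} turns these into a chain of ancestors of $\phi^{j^*}(x)$ in $T^{j^*}$, and the diameter bound forces any common ancestor of $\phi^{j^*}(x)$ and $\phi^{j^*}(y)$ above all of them. The paper phrases this in the contrapositive (bounding $d_X(x,y)$ by $\diam U_k$ when $d(\phi(x),\phi(y))\leq k$), and your $V_{j^*}$ should be taken to be the least common ancestor so that the geodesic passes through it, but the content is the same.
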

\begin{proof}
	We have already seen that $\phi$ is uniformly expansive. It remains to show that it is also effectively proper.
	
	Let $k\in\N$ and let $x,y\in X$ be such that $d(\phi(x),\phi(y))\leq k$. 
	For every $i\in\N$ there is some $0\leq j\leq n$ and $U\in\cU_i^j$ with $x\in U$. Therefore, there is some $j$ such that there exist numbers $i_1<\ldots<i_k<(n+1)k$ and elements $U_l\in \cU^j_{i_l}$ with $x\in U_l$ for $1\leq l\leq k$. By the construction from \cref{lem:covers}, we have $U_1\subseteq U_2\subseteq\ldots\subseteq U_k$, and, by the construction of $T^j$, we have $d_{T^j}(\phi^j(x),U_k)\geq k$. Therefore, $\phi^j(y)\subseteq U_{k}$ and $d_X(x,y)\leq \operatorname{diam} U_{k}\leq f(g(i_k))\leq f(g((n+1)k))$.
	
	Define $h\colon [0,\infty)\to [0,\infty)$ by $h(t):=\min\{i\in\N\mid t\leq f(g((n+1)k))\}$.
	Then $d(\phi(x),\phi(y))\geq h(d(x,y))$, so $\phi$ is effectively proper.
\end{proof}
\subsection*{Acknowledgments} I would like to thank David Rosenthal for helpful discussions and Ulrich Bunke, Fumiya Mikawa, Christoph Winges, Takamitsu Yamauchi and the referee for useful comments on a previous version.
\bibliographystyle{amsalpha}
\bibliography{trees}
\end{document}